\renewcommand{\phi}{\varphi}
\newcommand{\A}{\mathcal{A}}
\newcommand{\K}{\mathcal{K}}
\newcommand{\PP}{\mathcal{P}}
\newcommand{\PPb}{\overline{\mathcal{P}}}
\newcommand{\tU}{\widetilde{U}}
\newcommand{\Mat}{\mathrm{Mat}}
\renewcommand{\H}{\mathcal{H}}
\newcommand{\R}{\mathbb{R}}
\newcommand{\C}{\mathbb{C}}
\newcommand{\sgn}{\mathrm{sign}}
\newcommand{\Symb}{\mathrm{Symb}}
\newcommand{\Psit}{\widetilde\Psi}
\newcommand{\LLlim}{\operatorname{L^2-lim}}
\newcommand{\Llim}{\operatorname{L^1-lim}}
\newcommand{\Hom}{\mathrm{Hom}}
\newcommand{\vol}{\mathrm{vol}}
\newcommand{\supp}{\mathrm{supp}}
\newcommand{\id}{\mathrm{id}}
\renewcommand{\epsilon}{\varepsilon}
\newcommand{\eps}{\varepsilon}
\newcommand{\<}{\left\langle}
\renewcommand{\>}{\right\rangle}
\theoremstyle{plain}
\newtheorem{thm}{Theorem}[section]
\newtheorem{lem}[thm]{Lemma}
\theoremstyle{definition}
\newtheorem{dfn}[thm]{Definition}
\newtheorem{rem}[thm]{Remark}
\begin{document}

\title{{Conformal Structures in Noncommutative Geometry}}

\author{Christian B\"ar}

\address{Universit\"at Potsdam\\
Institut f\"ur Mathematik\\
Am Neuen Palais 10\\
14469 Potsdam\\
Germany}

\email{baer@math.uni-potsdam.de}

\subjclass[2000]{58B34,53C27,53A30}
\keywords{Fredholm module, spectral triple, Dirac operator, conformally
  equivalent Riemannian metrics}

\date{\today}

\begin{abstract}
It is well-known that a compact Riemannian spin manifold $(M,g)$ can be
reconstructed from its canonical spectral triple $(C^\infty(M),L^2(M,\Sigma
M),D)$ where $\Sigma M$ denotes the spinor bundle and $D$ the Dirac operator.
We show that $g$ can be reconstructed up to conformal equivalence from
$(C^\infty(M),L^2(M,\Sigma M),\sgn(D))$.
\end{abstract}

\maketitle

\section{Introduction}

To a closed Riemannian manifold $(M,g)$ with a fixed spin structure one can
associate its canonical spectral triple $(\A,\H,D)$ where $\A=C^\infty(M)$ is
the pre-$C^*$-algebra of smooth complex functions on $M$, $\H=L^2(M,\Sigma M)$
is the Hilbert space of square integrable complex spinors on $M$ and $D$ is
the Dirac operator.
It is not hard to see that $(M,g)$ can be reconstructed from $(\A,\H,D)$.
While $\A$ determines $M$ as a differentiable manifold, the pair $(\H,D)$
together with the action of $\A$ on $\H$ determine the geometry.
This is the starting point of noncommutative geometry because the concept of
spectral triples is not confined to commutative algebras $\A$.

The action of $\A$ on $\H$ (and the commutators with $D$) are important in the
reconstruction of the metric.
The pair $(\H,D)$ alone is up to unitary equivalence given by the eigenvalues
of the Dirac operator $D$.
These eigenvalues contain quite a bit of geometric information but they
are not sufficient to determine the metric, not even the topology of $M$.
For example, there are explicit examples of Dirac isospectral manifolds with
nonisomorphic fundamental groups \cite{B}.

Therefore it is debatable whether attempts to replace the metric by the Dirac
eigenvalues as basic dynamical variables in General Relativity can succeed
\cite{LR1,LR2}.
In the present note we will show that one can reconstruct the metric up to
conformal equivalence using a modification of canonical spectral triples
where the Dirac operator $D$ is replaced by $\sgn(D)$.
This is the difference of two spectral projections of $D$; 
one need not know a single Dirac eigenvalue.
This indicates that most information of a spectral triple is contained in the
interplay of $D$ with the action of $\A$, not in the spectrum of $D$.

Our approach is not to be confused with Connes' construction of a natural
Fredholm module on even-dimensional conformal manifolds \cite[Ch.~IV.4]{C}.
His construction is based on the fact that the Hodge $*$-operator on
differential forms of middle degree is conformally invariant.
We use the spinorial Dirac operator just as for spectral triples which works
in odd dimensions as well as in even dimensions.
We hope that our approach will open a door to noncommutative conformal
geometry just as spectral triples are the basic objects of noncommutative
Riemannian geometry.

The paper is organized as follows.
In the next section we introduce some terminology and recall the
reconstruction of the metric from the spectral triple.
In the third section we state and prove the main theorem.
It says that two Riemannian metrics $g$ and $g'$ on $M$ are conformally
equivalent if and only if their Fredholm modules $(\H,\sgn(D))$ and
$(\H',\sgn(D'))$ are weakly unitarily equivalent.
It seems to be a folklore fact among noncommutative geometers that $\sgn(D)$
determines the conformal structure.
Nevertheless, we found it worthwhile to give a precise formulation and proof
of this fact.

Our proof is based on symbolic calculus of the pseudo-differential operator
$\sgn(D)$.
The main technical difficulty is caused by the necessity to use a standard
fact for principal symbols in the case of low regularity
(Lemma~\ref{lem:irreg}).
For the sake of better readability these more technical considerations are
postponed to the last section.

{\bf Acknowledgements.}
This note arose from discussions at the Mini-Workshop on ``Dirac Operators in
Differential and Noncommutative Geometry'' in 2006 in Oberwolfach.
It is a pleasure to thank the participants, in particular Alexander Strohmaier,
for important remarks.
Many thanks go to Elmar Schrohe and Bert-Wolfgang Schulze for helpful remarks
on pseudo-differential operators.
I would also like to thank the Mathematische Forschungsinstitut Oberwolfach
for its hospitality and SPP 1154 funded by Deutsche Forschungsgemeinschaft
for financial support.

\section{Operator modules}

\begin{dfn}
Let $\A$ be a pre-$C^*$-algebra.
An \emph{operator module} of $\A$ is a pair $(\H,D)$ where $\H$ is a
complex Hilbert space together with a $*$-representation of $\A$ on $\H$ by
bounded linear operators and $D$ is a (possibly unbounded but densely defined)
linear operator in $\H$ whose domain is left invariant by the action of $\A$.
\end{dfn}

For the $*$-representation of $\A$ on $\H$ we will write $(a,h) \mapsto a\cdot
h = ah$.
This concept of operator modules contains Connes' spectral triples
\cite[Def.~9.16]{GBFV} as well as Fredholm modules which are used to define
K-homology \cite[Ch.~8]{HR}. 

\begin{dfn}
Two operator modules $(\H,D)$ and $(\H',D')$ of $\A$ are called \emph{unitarily
equivalent} if there exists a unitary isomorphism $U:\H \to \H'$ such that for all $a\in\A$ and $h\in\H$
\begin{itemize}
\item[(i)]
$U(ah) = aU(h)$
\item[(ii)]
$D' = UDU^{-1}$
\end{itemize}
In condition (ii) it is understood that $U$ maps the domain of definition of
$D$ onto the domain of $D'$.
If we replace (ii) by the weaker requirement that $D'-UDU^{-1}$ be a compact
operator, then we call $(\H,D)$ and $(\H',D')$ \emph{weakly unitarily
equivalent}.
\end{dfn}

Now let $(M,g)$ be a compact Riemannian spin manifold.
By a spin manifold we mean a manifold with a fixed spin structure.

We put $\A := C^\infty(M)$, the algebra of smooth $\C$-valued functions, $\H
:= L^2(M,\Sigma M)$, the Hilbert space of square integrable complex spinor
fields, and let $D$ be the Dirac operator. 
Then $(\H,D)$ is an operator module of $\A$ and $(\A,\H,D)$ is called the
\emph{canonical spectral triple} of $(M,g)$. 

It is well-known that $(\H,D)$ determines the Riemannian metric on $M$.
The argument for this is as follows, compare \cite[p.~544]{C}:
The metric distance $d(x,y)$ of two points in $(M,g)$ is given by
$$
d(x,y) = \sup\{ |a(x)-a(y)|\,|\, a\in\A \mbox{ with } \|[D,a]\| \leq 1\} .
$$
Notice that $d(x,y)=\infty$ if and only if $x$ and $y$ lie in different
connected components of $M$.
Now if $(\H,D)$ and $(\H',D')$ are unitarily equivalent, then
$\|[D,a]\|=\|[D',a]\|$ for any $a$ and hence $d=d'$.
Since the metric distance function $d$ determines the Riemannian metric
we have $g=g'$.
Conversely, if $g=g'$, then $(\H,D)=(\H',D')$.
This discussion shows that the following are equivalent:
\begin{itemize}
\item $g=g'$,
\item $(\H,D)$ and $(\H',D')$ are unitarily equivalent,
\item $(\H,D)=(\H',D')$.
\end{itemize}

The aim of this note is to find a similar criterion for $g$ and $g'$
being conformal to each other rather than equal.

\section{Conformal structures}

Two Riemannian metrics $g$ and $g'$ on $M$ are called \emph{conformally
equivalent}, if there exists a smooth function $v:M \to \R$ such that $g' =
e^{2v}g$.

Let $\sgn : \R \to \R$ be the \emph{sign function}, 
$$
\sgn(t) = \left\{
          \begin{array}{cc}
                 1, & \mbox{ if } t>0, \cr
                 0, & \mbox{ if } t=0, \cr
                 -1, & \mbox{ if } t<0.
          \end{array}\right.
$$
To a compact Riemannian spin manifold we associate the operator module
$(\H,\sgn(D))$ of $\A = C^\infty(M)$ where again $\H = L^2(M,\Sigma M)$ is the
Hilbert space of square integrable complex spinor fields, and $D$ is the Dirac
operator.
We call $(\H,\sgn(D))$ the \emph{canonical Fredholm module} of $(M,g)$.

\begin{thm}\label{thm:main}
Let $M$ be a compact spin manifold.
Let $g$ and $g'$ be Riemannian metrics on $M$ and let $(\H,\sgn(D))$ and
$(\H',\sgn(D'))$ be the corresponding canonical Fredholm modules of
$\A=C^\infty(M)$. 

Then $g$ and $g'$ are conformally equivalent if and only if $(\H,\sgn(D))$ and
$(\H',\sgn(D'))$ are weakly unitarily equivalent.
\end{thm}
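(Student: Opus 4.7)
The plan is to exhibit a unitary $U\colon\H\to\H'$ coming from the conformal covariance of the Dirac operator and verify by a principal-symbol computation that $U\sgn(D)U^{-1}-\sgn(D')$ is compact. Concretely, if $g'=e^{2v}g$, the spinor bundles are identified pointwise by a fiberwise $g$-to-$g'$ isometry $F$; I set $U\phi:=e^{-nv/2}F\phi$, with the scalar factor compensating for $d\vol_{g'}=e^{nv}d\vol_g$, so that $U$ is unitary and intertwines the $\A$-action since it acts pointwise. The standard conformal covariance of $D$ then yields $D'=e^{-v/2}\,UDU^{-1}\,e^{-v/2}$. Because $e^{\pm v/2}$ is a smooth positive scalar multiplier, a direct symbol computation combined with $|\xi|_g=e^v|\xi|_{g'}$ shows that both $\sgn(UDU^{-1})=U\sgn(D)U^{-1}$ and $\sgn(D')$ are classical pseudodifferential operators of order $0$ with the common principal symbol $ic_{g'}(\xi)/|\xi|_{g'}$, so their difference has order $-1$ and is compact.

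\textbf{Backward direction.} Conversely, assume a unitary $U$ intertwines the $\A$-action and $U\sgn(D)U^{-1}-\sgn(D')$ is compact. Since $U$ intertwines the multiplication action of $C^\infty(M)$, which extends by continuity to $C(M)$, the standard structure of intertwiners between multiplication representations on $L^2$-sections of hermitian bundles forces $U$ to act pointwise: $U\phi(x)=\tU(x)\phi(x)$ for a measurable family of bundle isomorphisms $\tU(x)\colon \Sigma_g M|_x\to \Sigma_{g'}M|_x$, unitary up to the smooth positive factor $(d\vol_{g'}/d\vol_g)^{1/2}$. The plan is then to show the almost-everywhere identity
\[
\tU(x)\,\frac{ic_g(\xi)}{|\xi|_g}\,\tU(x)^{-1}=\frac{ic_{g'}(\xi)}{|\xi|_{g'}}\qquad\text{for a.e.\ }x\text{ and all }\xi\ne 0,
\]
i.e.\ that the principal symbols of $U\sgn(D)U^{-1}$ and $\sgn(D')$ coincide. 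Granted this, the rest is algebra: rewriting the identity as $\tU(x)c_g(\xi)\tU(x)^{-1}=\lambda(x,\xi)\,c_{g'}(\xi)$ with $\lambda(x,\xi)=|\xi|_g/|\xi|_{g'}$, the additivity of $c_g$ and $c_{g'}$ in $\xi$ and the injectivity of $c_{g'}$ force $\lambda$ to be independent of $\xi$; hence $g^{-1}(x)=\lambda(x)^2(g')^{-1}(x)$, i.e.\ $g'=\lambda(x)^2 g$ pointwise. Smoothness of $g$ and $g'$ yields smoothness of $\lambda$, and setting $v:=\log\lambda$ gives $g'=e^{2v}g$.

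\textbf{Main difficulty.} The obstacle is the principal-symbol identity above. Since $\tU$ is a priori only $L^\infty$, the conjugate $U\sgn(D)U^{-1}$ is not visibly a classical pseudodifferential operator, so one cannot directly invoke the folklore implication ``a compact PsiDO of order $0$ has vanishing principal symbol'' on the difference $U\sgn(D)U^{-1}-\sgn(D')$. This is precisely the gap Lemma~\ref{lem:irreg} is designed to fill: it provides enough symbolic calculus to transfer the principal symbol through conjugation by a merely $L^\infty$ unitary bundle multiplier and to deduce the pointwise equality of symbols from compactness of the difference. With that technical ingredient in place, the algebraic manipulation above completes the proof.
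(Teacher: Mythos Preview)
Your proposal is correct and follows essentially the same route as the paper: the forward direction via conformal covariance of $D$ and a principal-symbol computation, and the backward direction via the pointwise-multiplier lemma for $U$ together with Lemma~\ref{lem:irreg} to identify symbols. The only minor variation is the final algebraic step: the paper squares the symbol identity using the Clifford relation $c(\xi)c(\eta)+c(\eta)c(\xi)=-2g(\xi,\eta)$ to obtain directly $\dfrac{g'(\xi,\eta)}{\|\xi\|_{g'}\|\eta\|_{g'}}=\dfrac{g(\xi,\eta)}{\|\xi\|_{g}\|\eta\|_{g}}$, whereas you use linearity and injectivity of $c_{g'}$ to force $\lambda(x,\xi)=|\xi|_g/|\xi|_{g'}$ to be independent of $\xi$; both are short and equivalent.
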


\begin{proof}
Let $v$ be a smooth function on $M$ such that $g' = e^{2v}g$. 
There exists a fiberwise isometric vector bundle isomorphism $\Psi : \Sigma M
\to \Sigma'M$ between the spinor bundles of $M$ with respect to the metrics $g$
and $g'$ such that we have for the Dirac operators
\begin{equation}\label{eq:Diracconf1}
D' = 
e^{-\frac{n+1}{2}v} \circ\Psi \circ D \circ \Psi^{-1} \circ e^{\frac{n-1}{2}v},
\end{equation}
see e.~g.\ \cite[Sec.~4]{H} for details.
Here $n$ denotes the dimension of $M$.
The fiberwise vector bundle isometry $\Psi$ does not induce a Hilbert space
isometry $L^2(M,\Sigma M) \to L^2(M,\Sigma'M)$ because the volume forms of $g$
and $g'$ are different, $\vol' = e^{nv} \vol$.
To correct this we put $\Psit := e^{-\frac{n}{2}v}\cdot\Psi$.
Now (\ref{eq:Diracconf1}) translates into
\begin{equation}\label{eq:Diracconf2}
D' = 
e^{-\frac{1}{2}v} \circ\Psit \circ D \circ \Psit^{-1} \circ e^{-\frac{1}{2}v}.
\end{equation}
Denote the unitary isomorphism induced by $\Psit$ by $U: L^2(M,\Sigma M) \to
L^2(M, \Sigma'M)$. 
Equation (\ref{eq:Diracconf2}) implies for the principal symbols
\begin{eqnarray*}
\sigma_{D'}(\xi) 
&=&
e^{-\frac{1}{2}v(x)} \cdot\Psit(x) \cdot \sigma_D(\xi) 
\cdot \Psit^{-1}(x) \cdot e^{-\frac{1}{2}v(x)}\\
&=&
e^{-v(x)} \cdot\Psit(x) \cdot \sigma_D(\xi) 
\cdot \Psit^{-1}(x)
\end{eqnarray*}
for $\xi\in T^*_xM$.
Now $\sgn(D)$ and $\sgn(D')$ are classical pseudo-differential operators of
order $0$, compare the argument in \cite[p.~48]{APS} based on the work in
\cite{S2}. 
See e.~g.\ \cite{RS} for an introduction to these operators.
For the principal symbol we have for nonzero $\xi \in T^*_xM$ 
$$
\sigma_{\sgn(D)}(\xi) = \frac{\sigma_D(\xi)}{|\sigma_D(\xi)|}.
$$
Here $|\sigma_D(\xi)|$ denotes the operator
$\sqrt{\sigma_D(\xi)^2}$ and
$\frac{\sigma_D(\xi)}{|\sigma_D(\xi)|}$ means $\sigma_D(\xi)\circ
|\sigma_D(\xi)|^{-1}=|\sigma_D(\xi)|^{-1}\circ\sigma_D(\xi)$.
Notice that $\sigma_D(\xi) \in \Hom(\Sigma_xM,\Sigma_xM)$ is a symmetric
operator. 
Moreover,
\begin{eqnarray*}
\sigma_{\sgn(D')}(\xi) 
&=& 
\frac{e^{-v(x)} \cdot\Psit(x) \cdot \sigma_D(\xi)\cdot \Psit^{-1}(x)}
{|e^{-v(x)} \cdot\Psit(x) \cdot \sigma_D(\xi)\cdot \Psit^{-1}(x)|}\\
&=&
\frac{\Psit(x) \cdot \sigma_D(\xi)\cdot \Psit^{-1}(x)}
{|\Psit(x) \cdot \sigma_D(\xi)\cdot \Psit^{-1}(x)|}\\
&=&
\frac{\Psit(x) \cdot \sigma_D(\xi)\cdot \Psit^{-1}(x)}
{\Psit(x) \cdot|\sigma_D(\xi)|\cdot \Psit^{-1}(x)} \\
&=&
\Psit(x) \cdot\frac{ \sigma_D(\xi)}
{|\sigma_D(\xi)|}\cdot \Psit^{-1}(x)\\
&=&
\sigma_{U\circ\sgn(D)\circ U^{-1}}(\xi).
\end{eqnarray*}
Hence $\sgn(D')-U\circ\sgn(D)\circ U^{-1}$ is a classical pseudo-differential
operator of order $0$ with vanishing principal symbol
and thus compact as an operator on $L^2(M,\Sigma'M)$, compare the exact
sequence (\ref{eq:ExSeq}) below.
This shows that the canonical Fredholm modules $(L^2(M,\Sigma M),\sgn(D))$ and
$(L^2(M,\Sigma'M),\sgn(D'))$ of $C^\infty(M)$ are weakly unitarily equivalent.

Conversely, assume that the canonical Fredholm modules
$(L^2(M,\Sigma M),\sgn(D))$ and
$(L^2(M,\Sigma'M),\sgn(D'))$ of $C^\infty(M)$ are weakly unitarily equivalent.
The unitary isomorphism $U$ commutes with the action of $C^\infty(M)$, hence
$U$ is induced by a section $\Psi \in L^\infty(M,\Hom(\Sigma M,\Sigma'M))$,
see Lemma~\ref{lem:Kommutante}.
Since $U$ is invertible, $\Psi$ is invertible almost everywhere and $\Psi^{-1}
\in L^\infty(M,\Hom(\Sigma'M,\Sigma M))$. 

The principal symbol of a Dirac operator is given by Clifford multiplication
with respect to the metric $g$, more precisely, 
$\sigma_D(\xi) = ic_g(\xi)$ for all $\xi\in T^*M$.
Clifford multiplication has the property $c_g(\xi)c_g(\eta) +
c_g(\eta)c_g(\xi) + 2 g(\xi,\eta) = 0$ for all $\xi,\eta \in T^*_xM$ and all
$x\in M$.
Hence the principal symbol of $\sgn(D)$ is given by $\sigma_{\sgn(D)}(\xi) =
\frac{ic_g(\xi)}{\|\xi\|_g}$ for $\xi\in T^*_xM$, $\xi\not=0$.
By Lemma~\ref{lem:irreg} 
$$
\sigma_{U\circ\sgn(D)\circ U^{-1}}(\xi) =
\Psi(x) \cdot \sigma_{\sgn(D)}(\xi) \cdot \Psi^{-1}(x) .
$$
Since $\sgn(D')$ and $U\circ\sgn(D)\circ U^{-1}$ differ by a compact operator
we have $\sigma_{\sgn(D')} = \sigma_{U\circ\sgn(D)\circ U^{-1}}$.
This means
$$
\frac{c_{g'}(\xi)}{\|\xi\|_{g'}}
=
\Psi(x)\cdot \frac{c_g(\xi)}{\|\xi\|_g} \cdot \Psi^{-1}(x)
$$
for all nonzero $\xi$.
Therefore
\begin{eqnarray*}
\frac{-2g'(\xi,\eta)}{\|\xi\|_{g'}\|\eta\|_{g'}}
&=&
\frac{c_{g'}(\xi)}{\|\xi\|_{g'}}\frac{c_{g'}(\eta)}{\|\eta\|_{g'}}
+ \frac{c_{g'}(\eta)}{\|\eta\|_{g'}}\frac{c_{g'}(\xi)}{\|\xi\|_{g'}}\\
&=&
\Psi(x) \cdot \left(
\frac{c_{g}(\xi)}{\|\xi\|_{g}}\frac{c_{g}(\eta)}{\|\eta\|_{g}}
+ \frac{c_{g}(\eta)}{\|\eta\|_{g}}\frac{c_{g}(\xi)}{\|\xi\|_{g}}
\right)\cdot \Psi^{-1}(x)\\
&=&
\Psi(x) \cdot \left(
\frac{-2g(\xi,\eta)}{\|\xi\|_{g}\|\eta\|_{g}}
\right)\cdot \Psi^{-1}(x)\\
&=&
\frac{-2g(\xi,\eta)}{\|\xi\|_{g}\|\eta\|_{g}} .
\end{eqnarray*}
The last equation holds because
$\frac{-2g(\xi,\eta)}{\|\xi\|_{g}\|\eta\|_{g}}$ is scalar and thus commutes
with $\Psi(x)$.
This proves that $g$ and $g'$ are conformally equivalent.
\end{proof}

\begin{rem}
Let $\pi_+ : \R \to \R$ and $\pi_0:\R\to\R$ be given by 
$$
\pi_+(t) = \left\{
\begin{array}{cl}
1, & \mbox{ if } t>0,\\
0, & \mbox{ if } t\leq 0.
\end{array}
\right.
\quad\mbox{ and }\quad\quad
\pi_0(t) = \left\{
\begin{array}{cl}
1, & \mbox{ if } t=0,\\
0, & \mbox{ if } t\not= 0.
\end{array}
\right. .
$$
Since $\pi_+(D) = \frac12(\sgn(D)+\id-\pi_0(D))$ and since $\pi_0(D)$ is a
finite rank operator and hence compact we see that $(\H,\sgn(D))$ and
$(\H',\sgn(D'))$ are weakly unitarily equivalent if and only if $(\H,\pi_+(D))$
and $(\H',\pi_+(D'))$ are weakly unitarily equivalent.
Therefore one can replace $\sgn(D)$ by the spectral projector $\pi_+(D)$ in
Theorem~\ref{thm:main}. 
\end{rem}

\section{Three auxiliary lemmas}

Throughout this section let $E\to M$ and $E'\to M$ be
Hermitian vector bundles over the closed Riemannian manifold $M$.

\begin{lem}\label{lem:Kommutante}
Let $U :L^2(M,E) \to L^2(M,E')$ be a bounded linear map.
Suppose $U(a\phi)=aU(\phi)$ for all $a\in C^\infty(M)$ and all $\phi\in
L^2(M,E)$.

Then there exists a unique $\Psi \in L^\infty(M,\Hom(E,E'))$ such that 
$$
(U\phi)(x) = \Psi(x)\phi(x)
$$
for almost all $x\in M$ and for all $\phi\in L^2(M,E)$.
\end{lem}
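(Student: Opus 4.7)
The plan is to reduce to the scalar case via local trivializations and then exploit the standard fact that the multiplication algebra $L^\infty$ is its own commutant inside $B(L^2)$. Uniqueness is the easy part: if $\Psi_1$ and $\Psi_2$ both represent $U$, then $(\Psi_1-\Psi_2)\phi = 0$ a.e.\ for every $\phi\in L^2(M,E)$; plugging in local sections that span the fiber almost everywhere forces $\Psi_1 = \Psi_2$ in $L^\infty(M,\Hom(E,E'))$.

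For existence, the first step would be to establish locality: for any $\phi\in L^2(M,E)$ and any open set $\Omega\subset M$ on which $\phi$ vanishes a.e., the section $U\phi$ also vanishes a.e.\ on $\Omega$. This follows by choosing a smooth $a\in C^\infty(M)$ supported in $\Omega$ with prescribed values on a compact subset and writing $a(U\phi) = U(a\phi) = 0$. With locality in hand, I would cover $M$ by finitely many charts $\Omega_\alpha$ over which both $E$ and $E'$ admit smooth orthonormal frames $(e_i^\alpha)$ and $(e_j^{\prime\alpha})$ and pick a subordinate partition of unity $(\chi_\alpha)$ with $\chi_\alpha\in C^\infty(M)$.

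Next I would extract the local matrix entries. Fix a cutoff $\eta_\alpha\in C^\infty(M)$, supported in $\Omega_\alpha$ and equal to $1$ on $\supp\chi_\alpha$. Set $\sigma_{i}^\alpha := U(\eta_\alpha e_i^\alpha)\in L^2(M,E')$, and expand it in the local frame on $\Omega_\alpha$ as $\sigma_i^\alpha|_{\Omega_\alpha} = \sum_j \psi_{ij}^\alpha\, e_j^{\prime\alpha}$ with $\psi_{ij}^\alpha \in L^2(\Omega_\alpha)$. The commutation relation, applied to $a\in C^\infty(M)$ with $\supp a\subset \{\eta_\alpha = 1\}$, yields
$$
U(a\, e_i^\alpha) = a\,\sigma_i^\alpha,
$$
i.e.\ in components $\sum_j (a\psi_{ij}^\alpha) e_j^{\prime\alpha}$. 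Thus on the dense subspace of sections of the form $a\, e_i^\alpha$, $U$ acts by the multiplication matrix $(\psi_{ij}^\alpha)$. The bound $\|U\|$ then gives $\|a\psi_{ij}^\alpha\|_{L^2}\le \|U\|\cdot\|a e_i^\alpha\|_{L^2}$ for all smooth $a$ with support in $\{\eta_\alpha=1\}$, and the classical criterion (if multiplication by a measurable function is $L^2$-bounded of norm $C$, the function is $L^\infty$ with $\|\cdot\|_\infty\le C$) forces $\psi_{ij}^\alpha\in L^\infty$ with uniform bound by $\|U\|$. Hence the local matrix $\Psi^\alpha(x) := (\psi_{ij}^\alpha(x))$ defines an element of $L^\infty(\Omega_\alpha',\Hom(E,E'))$ on $\Omega_\alpha' := \{\eta_\alpha = 1\}^{\mathrm{int}}$.

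Finally I would glue: by uniqueness in each intersection $\Omega_\alpha'\cap\Omega_\beta'$, the sections $\Psi^\alpha$ and $\Psi^\beta$ agree a.e.\ after the change-of-frame, so they patch into a global $\Psi\in L^\infty(M,\Hom(E,E'))$ defined a.e.\ on $\bigcup_\alpha \Omega_\alpha'=M$. To see that $(U\phi)(x)=\Psi(x)\phi(x)$ for all $\phi\in L^2(M,E)$ (not just for frame sections times smooth functions), write $\phi = \sum_\alpha \chi_\alpha\phi$ and approximate each $\chi_\alpha\phi$ in $L^2$ by finite $C^\infty$-linear combinations of the $e_i^\alpha$; continuity of $U$ and of the multiplication operator $M_\Psi$ (both bounded on $L^2$) transfers the identity from the dense subspace to all of $L^2(M,E)$. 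The main technical nuisance will be keeping track of measurability and the change-of-frame cocycle on overlaps, but no serious obstacle arises because everything reduces to the one-variable commutant theorem applied to each $\psi_{ij}^\alpha$.
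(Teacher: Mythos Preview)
Your argument is correct. The paper takes a different, more global route: rather than trivializing locally and patching, it chooses complementary bundles $F$ and $F'$ so that $E\oplus F\cong E'\oplus F'\cong\C^N$ are trivial of the same rank (possible since $M$ is compact), extends $U$ by zero to an operator $\widetilde U$ on $L^2(M,\C^N)$, observes that $\widetilde U$ commutes with the von Neumann algebra $L^\infty(M)$ generated by $C^\infty(M)$, and then invokes the standard fact that the commutant of $L^\infty(M)$ on $L^2(M,\C^N)$ is $L^\infty(M,\Mat(\C,N))$. Your approach trades the Serre--Swan step and the commutant citation for an explicit local computation of the matrix entries plus a gluing argument; it is longer but more self-contained, and the intermediate locality statement you isolate is useful in its own right. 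One small point of tidiness: you should choose $\eta_\alpha\equiv 1$ on an open neighborhood of $\supp\chi_\alpha$, not merely on $\supp\chi_\alpha$ itself, so that $\supp\chi_\alpha\subset\Omega_\alpha'=\{\eta_\alpha=1\}^{\mathrm{int}}$ and the sets $\Omega_\alpha'$ genuinely cover $M$.
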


\begin{proof}
Uniqueness of $\Psi$ is obvious.
To show existence we choose trivializing complements $F$ and $F'$ for the
bundles $E$ and $E'$, i.~e., $F$ and $F'$ are Hermitian vector bundles over
$M$ such that $E\oplus F$ and $E'\oplus F'$ are trivial bundles.
Without loss of generality we assume that $E\oplus F$ and $E'\oplus F'$ have
equal rank $N$.
We extend $U$ trivially to an operator 
$$
\tU = \begin{pmatrix}U & 0 \cr 0 & 0\end{pmatrix}
: L^2(M,E\oplus F)= L^2(M,\C^N) \to L^2(M,E'\oplus F')= L^2(M,\C^N) .
$$
The extended operator $\tU$ still commutes with the action of $C^\infty(M)$ on
$L^2(M,\C^N)$ and hence also with the action of $L^\infty(M)$, the von Neumann
algebra generated by $C^\infty(M)$.
It is well-known that the commutant of $L^\infty(M)$ on the Hilbert space
$L^2(M,\C^N)$ is given by $L^\infty(M,\Mat(\C,N))$, see e.~g.\ \cite[p.~61]{F}.
This proves the lemma.
\end{proof}

We denote the space of classical pseudo-differential operators of order $0$
acting on sections in $E$ by $\PP(M,E)$.
Each element of $\PP(M,E)$ extends to a bounded linear map on the Hilbert
space $L^2(M,E)$.
We denote the closure of $\PP(M,E)$ with respect to the $L^2$-operator
norm by $\PPb(M,E)$.
There is a well-known exact sequence \cite[Thm.~11.1]{S}
\begin{equation}\label{eq:ExSeq}
0 \longrightarrow \K(L^2(M,E)) \longrightarrow \PPb(M,E)
\stackrel{\sigma_\bullet}{\longrightarrow} \Symb(M,E) \longrightarrow 0
\end{equation}
where $\K$ stands for compact operators and $\Symb(M,E) = \{\sigma\in
C^0(S^*M,\Hom(\pi^*E,\pi^*E))\,|\, \sigma(t\xi)=\sigma(\xi) \mbox{ for all
  $t>0$ and all $\xi\not=0$}\}$. 
Here $\pi:S^*M \to M$ is the cotangent bundle with the zero-section removed.
The symbol map $\sigma_\bullet$, $P \mapsto \sigma_P$, can be characterized as
follows:

\begin{lem}\label{lem:symbol}
Let $P\in\PPb(M,E)$, let $h\in C^\infty(M)$ be a real function and let $f\in
C^\infty(M,E)$ be a section such that
$dh(x)\not=0$ for all $x\in\supp(f)$.
Then
$$
e^{-ith}P(e^{ith}f) \xrightarrow{L^2} \sigma_P(dh)f
$$
as $t\to\infty$.
\end{lem}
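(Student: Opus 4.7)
The plan is to prove the result first for smooth classical zeroth-order pseudo-differential operators $P \in \PP(M,E)$ by an explicit oscillatory-integral computation, and then to extend it to the operator-norm closure $\PPb(M,E)$ by density.

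For the smooth case, I first reduce to a local computation. By a partition of unity and pseudolocality of $P$, I may assume that $f$ is supported in a coordinate chart over which $E$ is trivialized, and that $P$ is properly supported and, modulo a smoothing operator, represented in that chart by an oscillatory integral with classical symbol $p(x,\xi) \sim p_0(x,\xi) + p_{-1}(x,\xi) + \ldots$. The smoothing remainder applied to $e^{ith}f$ contributes $\O(t^{-\infty})$ in $L^2$, as one sees by repeated integration by parts in $y$ using $dh \neq 0$ on $\supp f$. Setting $\Phi(x,y) := \int_0^1 dh(x+s(y-x))\,ds$ so that $h(y) - h(x) = (y-x) \cdot \Phi(x,y)$, one has
\[
\bigl(e^{-ith} P(e^{ith} f)\bigr)(x) = (2\pi)^{-n} \iint e^{i(x-y) \cdot (\xi - t\Phi(x,y))}\, p(x,\xi)\, f(y)\,dy\,d\xi,
\]
and the substitution $\xi = \eta + t\Phi(x,y)$ transforms this into
\[
(2\pi)^{-n} \iint e^{i(x-y) \cdot \eta}\, p\bigl(x, \eta + t\Phi(x,y)\bigr)\, f(y)\,dy\,d\eta.
\]
Since $\Phi(x,x) = dh(x) \neq 0$ on $\supp f$, for large $t$ the second argument of $p$ has magnitude comparable to $t$; the $0$-homogeneity of $p_0$ and the order-$(-1)$ decay of $p - p_0$ give
\[
p\bigl(x, \eta + t\Phi(x,y)\bigr) \longrightarrow p_0\bigl(x, dh(x)\bigr) = \sigma_P(dh)(x)
\]
uniformly as $t \to \infty$ on compact $(y,\eta)$-sets with $y$ close to $x$. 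After localizing to such a neighborhood via nonstationary phase in $\eta$, Fourier inversion $(2\pi)^{-n}\iint e^{i(x-y)\eta}\,f(y)\,dy\,d\eta = f(x)$ yields the required limit.

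For the density step, approximate $P \in \PPb(M,E)$ by $P_n \in \PP(M,E)$ with $\|P_n - P\|_{L^2 \to L^2} \to 0$. The exact sequence (\ref{eq:ExSeq}) gives continuity of the symbol map, so $\|\sigma_{P_n} - \sigma_P\|_\infty \to 0$ and hence $\sigma_{P_n}(dh) f \to \sigma_P(dh) f$ in $L^2$. Combined with the uniform bound $\|e^{-ith}(P - P_n)(e^{ith} f)\|_{L^2} \leq \|P - P_n\|\,\|f\|_{L^2}$ and the smooth-case result applied to each $P_n$, a standard $\epsilon/3$ argument completes the proof.

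The main obstacle I anticipate is the rigorous passage to the limit in the oscillatory integral above: the integrand $p(x, \eta + t\Phi(x,y))$ does not decay in $\eta$, so convergence must be extracted from the oscillations rather than by dominated convergence. This is handled by non-stationary phase in $\eta$ to confine the essential contribution to a neighborhood of $y = x$, combined with uniform symbol estimates in $t$; these are standard but somewhat delicate items from the theory of oscillatory integrals.
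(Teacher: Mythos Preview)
Your density/$\eps$-argument for passing from $\PP(M,E)$ to $\PPb(M,E)$ is exactly the paper's proof; the paper does not prove the smooth case at all but simply cites H\"ormander~[Ho], noting that convergence there is even uniform. Your oscillatory-integral sketch of that cited result via the Kuranishi substitution is the standard argument, so the overall approach coincides with the paper's.

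One imprecision worth fixing in your sketch: after the substitution, the amplitude $p\bigl(x,\eta+t\Phi(x,y)\bigr)$ converges (uniformly on compact $(y,\eta)$-sets with $\Phi(x,y)\neq 0$) to $p_0\bigl(x,\Phi(x,y)\bigr)$, not to $p_0\bigl(x,dh(x)\bigr)$. The value $dh(x)$ only appears after you use that the oscillatory integral $(2\pi)^{-n}\iint e^{i(x-y)\cdot\eta}\,a(x,y)\,f(y)\,dy\,d\eta$ with $\eta$-independent amplitude collapses to $a(x,x)f(x)=p_0\bigl(x,\Phi(x,x)\bigr)f(x)=\sigma_P(dh)(x)f(x)$; this is also where the uniform order-$0$ amplitude bounds in $t$ are needed to justify the passage to the limit under the oscillatory integral, the point you flag as the main obstacle.
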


\begin{proof}
The statement is known to hold if $P\in\PP(M,E)$, compare \cite[Sec.~2]{Ho}.
In this case the convergence is uniform.

Let $P\in\PPb(M,E)$ and choose $P_j\in \PP(M,E)$ such that $P_j \to P$ in the
$L^2$-norm topology. 
Then $\sigma_{P_j} \to \sigma_P$ uniformly.
Fix $\eps>0$.
Choose $j$ so large that 
$$
\|P-P_j\| \leq \eps
$$
and 
$$
\|\sigma_P-\sigma_{P_j}\|_{C^0} \leq \eps.
$$
Now choose $T$ so large that 
$$
\|\sigma_{P_j}(dh)f - e^{-ith}P_j(e^{ith}f) \|_{C^0} \leq \eps
$$
for all $t\geq T$.
Then we have for such $t$
\begin{eqnarray*}
\lefteqn{\|\sigma_{P}(dh)f - e^{-ith}P(e^{ith}f) \|_{L^2}}\\
&\leq&
\|\sigma_P(dh)f-\sigma_{P_j}(dh)f\|_{L^2}
+ \|\sigma_{P_j}(dh)f - e^{-ith}P_j(e^{ith}f) \|_{L^2}\\
&&
+ \|e^{-ith}(P_j-P)(e^{ith}f)\|_{L^2}\\
&\leq&
\eps \cdot \|dh\|_{C^0}\cdot \|f\|_{L^2}
+ \eps \cdot \sqrt{\vol(M)}
+ \eps \cdot \|f\|_{L^2}\\
&=&
C\cdot \eps
\end{eqnarray*}
where the constant $C$ depends only on $M$, $h$, and $f$.
This proves the lemma.
\end{proof}

Both $\PPb(M,E)$ and $\Symb(M,E)$ are $C^*$-algebras and the symbol map is a
homomorphism, $\sigma_{P\circ Q}(\xi) =
\sigma_{P}(\xi)\circ\sigma_{Q}(\xi)$.
In particular, if $\Phi\in C^0(M,\Hom(E',E))$ and $\Psi\in C^0(M,\Hom(E,E'))$,
then
$$
\sigma_{\Psi\circ P\circ \Phi}(\xi) = \Psi(x)\cdot\sigma_P(\xi)\cdot\Phi(x)
$$
holds for all $\xi\in S^*_xM$.
The following lemma says that this is still true if $\Phi$ and $\Psi$ are only
$L^\infty$ provided the left hand side makes sense.

\begin{lem}\label{lem:irreg}
Let $P\in\PP(M,E)$, let $\Phi\in L^\infty(M,\Hom(E',E))$, and let $\Psi\in
L^\infty(M,\Hom(E,E'))$.
Suppose $Q:=\Psi\circ P\circ \Phi \in \PPb(M,E')$.
Then 
$$
\sigma_Q(\xi) = \Psi(x)\cdot\sigma_P(\xi)\cdot\Phi(x)
$$
holds for almost all $x\in M$ and $\xi \in S^*_xM$.
\end{lem}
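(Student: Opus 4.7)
The natural approach is to extract the symbol of $Q$ via the limit characterization of Lemma \ref{lem:symbol} and compare it with the (already explicit) symbol of $P$. Since $e^{ith}$ is a scalar function, it commutes with the bundle morphisms $\Phi$ and $\Psi$, so for every $f\in C^\infty(M,E')$ one has
$$
e^{-ith} Q(e^{ith} f) \;=\; \Psi\cdot e^{-ith} P\bigl(e^{ith}\,\Phi f\bigr).
$$
Applied to $Q\in\PPb(M,E')$, Lemma \ref{lem:symbol} identifies the $L^2$-limit of the left-hand side as $\sigma_Q(dh)f$ whenever $dh\neq 0$ on $\supp(f)$. The task therefore reduces to identifying the $L^2$-limit of the right-hand side as $\Psi\,\sigma_P(dh)\,\Phi f$.

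Multiplication by $\Psi\in L^\infty$ is $L^2$-continuous, so it suffices to show
$$
e^{-ith} P\bigl(e^{ith}\,\Phi f\bigr) \xrightarrow{L^2} \sigma_P(dh)\,\Phi f,
$$
i.e.\ to extend Lemma \ref{lem:symbol} for the smooth operator $P\in\PP(M,E)$ from smooth test inputs to the merely $L^\infty$-regular section $\Phi f$. I would localise by choosing the real function $h$ so that $dh$ is nowhere vanishing on an open set $U$ and taking $f$ smooth with $\supp(f)\subset U$; then $\Phi f$ lies in $L^2$ with essential support in $U$ and can be $L^2$-approximated by smooth sections $g_n$ compactly supported in $U$. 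For each $g_n$ the classical version of Lemma \ref{lem:symbol} gives uniform, hence $L^2$, convergence $e^{-ith}P(e^{ith}g_n)\to\sigma_P(dh)g_n$. A standard $\epsilon/3$-estimate using $L^2$-boundedness of $P$ and the boundedness of $\sigma_P(dh)$ on $U$ (where $\sigma_P$ is smooth and $dh$ is bounded away from zero) then closes the gap to $\Phi f$.

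Comparing the two limits yields $\sigma_Q(dh)\,f = \Psi\,\sigma_P(dh)\,\Phi f$ in $L^2(M,E')$ for every smooth $f$ supported in $U$, whence the pointwise identity $\sigma_Q(dh)(x) = \Psi(x)\,\sigma_P(dh)(x)\,\Phi(x)$ holds for almost every $x\in U$. Varying $h$ and $U$ realises every covector direction at every point of $M$; continuity of $\sigma_Q$ and of $\sigma_P$ in $\xi$, combined with a countable dense choice of auxiliary functions $h$, lets one remove the $\xi$-dependence of the exceptional null set. The only genuine obstacle is the regularity upgrade in the second step: the mixture of the smooth operator $P$ with $L^\infty$ coefficients $\Phi,\Psi$ forces one to substitute a nonsmooth section into the limit formula of Lemma \ref{lem:symbol}, and the mollification-plus-$\epsilon/3$ argument is the technical heart of the lemma.
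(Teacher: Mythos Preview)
Your argument is correct and in fact streamlines the paper's proof. Both approaches feed the characterization of Lemma~\ref{lem:symbol} into an approximation argument, but the organization differs. The paper approximates \emph{both} $\Phi$ and $\Psi$ in $L^2$ by smooth $\Phi_j,\Psi_j$, sets $Q_j=\Psi_j P\Phi_j$, and then controls $(\sigma_Q(dh)f,g)_{L^2}-(\sigma_{Q_j}(dh)f,g)_{L^2}$ weakly against a second test section $g$; this is forced because $L^2$-convergence of $\Psi_j$ does not give operator-norm convergence of multiplication by $\Psi_j$. You sidestep that issue entirely by observing that the fixed $\Psi\in L^\infty$ already acts boundedly on $L^2$, so it can simply be pulled through the limit, and the only approximation needed is of the single $L^2$-section $\Phi f$ by smooth $g_n$. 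The $\epsilon/3$ step then works cleanly because $\|e^{-ith}P(e^{ith}r_n)\|_{L^2}\le \|P\|\,\|r_n\|_{L^2}$ uniformly in $t$. Your route avoids the auxiliary test section $g$, the decomposition into cross terms $\Psi_jPq_j+r_jP\Phi$, and the passage through $L^1$-limits; the paper's route has the minor advantage of keeping all inputs to Lemma~\ref{lem:symbol} smooth throughout, at the cost of a longer bookkeeping.
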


\begin{proof}
Since $M$ is compact $L^\infty\subset L^2$.
Choose $\Phi_j\in C^\infty(M,\Hom(E',E))$ and $\Psi_j\in
C^\infty(M,\Hom(E,E'))$ such that $\Phi_j \xrightarrow{L^2} \Phi$ and $\Psi_j
\xrightarrow{L^2} \Psi$ as $j\to\infty$. 
Note that $Q_j := \Psi_j\circ P\circ \Phi_j \in \PP(M,E')$ and 
\begin{equation}\label{eq:spj}
\sigma_{Q_j}(\xi) = \Psi_j(x)\cdot\sigma_P(\xi)\cdot\Phi_j(x).
\end{equation}
Put $r_j:=\Psi-\Psi_j$ and $q_j:=\Phi-\Phi_j$.
Let $h\in C^\infty(M)$ be a real function such that its critical set $\{x\in
M\,|\,dh(x)=0\}$ consists of finitely many points, e.~g.\ $h$ can be any Morse
function. 
Let $f,g\in C^\infty(M,E')$ vanish on a neighborhood of the critical set
$\{dh=0\}$. 
Then $x \mapsto \< \sigma_{Q_j}(dh(x))f(x),g(x)\>$ are well-defined smooth
functions.
From (\ref{eq:spj}) and $L^2$-convergence of $\Phi_j$ and $\Psi_j$ we conclude
$$
\<\sigma_{Q_j}(dh)f,g\> \xrightarrow{L^1} 
\<\Psi\sigma_P(dh)\Phi f,g\>
$$
as $j\to\infty$ and hence
\begin{equation}\label{eq:spjInt}
(\sigma_{Q_j}(dh)f,g)_{L^2} 
= \int_M \<\sigma_{Q_j}(dh(x))f(x),g(x)\>\, dx
\xrightarrow{j\to\infty} (\Psi\sigma_P(dh)\Phi f,g)_{L^2} .
\end{equation}

On the other hand, we compute using Lemma~\ref{lem:symbol}
\begin{eqnarray*}
\lefteqn{(\sigma_{Q}(dh)f,g)_{L^2}}\\
&=&
\int_M \< {\LLlim_{t\to\infty}} e^{-ith}Q(e^{ith}f),g\>\, dx\\
&=&
\int_M \<\LLlim_{t\to\infty} e^{-ith}
(\Psi_j+r_j)P((\Phi_j+q_j)e^{ith}f),g\>\, dx\\ 
&=&
(\sigma_{Q_j}(dh)f,g)_{L^2}
+ \int_M \<\LLlim_{t\to\infty} e^{-ith}
(\Psi_j P q_j + r_jP\Phi)e^{ith}f,g\>\, dx\\
&=&
(\sigma_{Q_j}(dh)f,g)_{L^2}
+ \int_M \Llim_{t\to\infty} e^{-ith}\<
(\Psi_j P q_j + r_jP\Phi)e^{ith}f,g\>\, dx\\
&=&
(\sigma_{Q_j}(dh)f,g)_{L^2}
+ \lim_{t\to\infty}\int_M  e^{-ith(x)}\<
(\Psi_j P q_j + r_jP\Phi)e^{ith}f|_{x},g(x)\>\, dx .
\end{eqnarray*}
Now fix $\eps>0$ and choose $j_0$ such that $\|r_j\|_{L^2}, \|q_j\|_{L^2} <
\eps$ for all $j\geq j_0$.
Then we have for all $j\geq j_0$
\begin{eqnarray}
\lefteqn{\left|\lim_{t\to\infty} \int_M e^{-ith(x)}
\<(\Psi_j P q_j + r_jP\Phi)e^{ith}f)|_{x},g(x)\>\, dx\right|}\nonumber\\
&\leq&
\limsup_{t\to\infty} \int_M \left|e^{-ith(x)}
\<(\Psi_j P q_j + r_jP\Phi)e^{ith}f)|_{x},g(x)\>\right|\, dx\nonumber\\
&=&
\limsup_{t\to\infty} \int_M \left|
\<(\Psi_j P q_j + r_jP\Phi)e^{ith}f)|_{x},g(x)\>\right|\, dx .
\label{eq:rest}
\end{eqnarray}
Denote the $L^2$-operator norm of $P$ by $C$.
Then we have for all $t>0$
\begin{eqnarray}
\int_M \left|\<(\Psi_j P q_j)e^{ith}f)|_{x},g(x)\>\right|\, dx 
&\leq&
\|P(q_j e^{ith} f)\|_{L^2} \cdot \|\Psi_j^*g\|_{L^2}\nonumber\\ 
&\leq&
C \cdot\|q_j\|_{L^2}\cdot \|f\|_{L^\infty}\cdot\|\Psi_j^*\|_{L^2}\cdot\|g\|_{L^\infty}\nonumber\\ 
&\leq&
C\cdot\eps\cdot\|f\|_{L^\infty}\cdot(\|\Psi\|_{L^2}+\eps)\cdot\|g\|_{L^\infty}
\label{eq:summand1}
\end{eqnarray}
and
\begin{eqnarray}
 \int_M \left|\<(r_jP\Phi)e^{ith}f)|_{x},g(x)\>\right|\, dx
&\leq&
\|P\Phi e^{iht} f\|_{L^2}\cdot\|r_j^*g\|_{L^2}\nonumber\\
&\leq&
C \cdot\|\Phi e^{ith} f\|_{L^2}\cdot\|r_j^*\|_{L^2}\cdot\|g\|_{L^\infty}\nonumber\\
&\leq&
C \cdot\|\Phi\|_{L^2}\cdot\|f\|_{L^\infty}\cdot\eps\cdot\|g\|_{L^\infty}.
\label{eq:summand2}
\end{eqnarray}
Plugging (\ref{eq:summand1}) and (\ref{eq:summand2}) into (\ref{eq:rest})
we find that 
$$
\left|\lim_{t\to\infty} \int_M e^{-ith(x)}
\<(\Psi_j P q_j + r_jP\Phi)e^{ith}f)|_{x},g(x)\>\, dx\right|
\quad\leq\quad
C'\cdot(\eps + \eps^2)
$$
for all $j\geq j_0$ with a constant $C'$ independent of $\eps$, $j_0$, and $j$.
Thus
$$
\lim_{j\to\infty}(\sigma_{Q_j}(dh)f,g)_{L^2} =
(\sigma_{Q}(dh)f,g)_{L^2}.
$$
By (\ref{eq:spjInt}) this means
$$
(\sigma_{Q}(dh)f,g)_{L^2} = 
(\Psi\sigma_P(dh)\Phi f,g)_{L^2}.
$$
Therefore
$$
\sigma_{Q}(dh(x)) = \Psi(x)\sigma_P(dh(x))\Phi(x)
$$
for almost all $x\in M$ with $dh(x)\not=0$.
Since $h$ is arbitrary
$$
\sigma_{Q}(\xi) = \Psi(x)\sigma_P(\xi)\Phi(x)
$$
for almost all $x\in M$ and $\xi \in S^*_xM$.
\end{proof}

\end{document}